\title[]{Okounkov bodies for ample line bundles with applications to 
multiplicities for group representations}
\author{Henrik Sepp\"anen*} 
\thanks{*supported by the DFG Priority Programme 1388 ``Representation 
Theory"}
\keywords{Ample line bundle, Okounkov body, irreducible representation, branching laws}
\date{\today}
\address{Henrik Sepp\"{a}nen,
Mathematisches Institut,
Georg-August-Universit\"at G\"ottingen,
Bunsenstra\ss e 3-5, 
D-37073 G\"ottingen,
Germany}
\email{hseppaen@uni-math.gwdg.de}
\newcommand{\C}{\mathbb{C}}
\newcommand{\R}{\mathbb{R}}
\newcommand{\N}{\mathbb{N}}
\newcommand{\Z}{\mathbb{Z}}
\newtheorem{prop}{Proposition}[section]
\newtheorem{lemma}[prop]{Lemma}
\newtheorem{thm}[prop]{Theorem}
\theoremstyle{definition}
\newtheorem{rem}[prop]{Remark}
\newtheorem{example}{Example}
\begin{document}
 
\maketitle

\begin{abstract}
Let $\mathscr{L} \rightarrow X$ be an ample line 
bundle over a complex normal projective variety $X$. 
We construct a flag 
$X_0 \subseteq X_1 \subseteq \cdots \subseteq X_n=X$ of 
subvarieties for which the associated Okounkov body for 
$\mathscr{L}$ is a rational polytope. In the case when $X$ is 
a homogeneous surface, and the pseudoeffective cone of $X$ is rational 
polyhedral, we also show that the global Okounkov body is a rational polyhedral 
cone if the flag of subvarieties is suitably chosen.
Finally, we provide an application to the asymptotic study of group 
representations.
\end{abstract}

\section{Introduction}
The \emph{Okounkov body} of a line bundle 
$\mathscr{L} \rightarrow X$ over an $n$-dimensional complex 
variety is a convex set in $\R^n$ which carries information about the
section ring $\bigoplus_{k=0}^\infty H^0(X,\mathscr{L}^k)$ of $\mathscr{L}$. 
The idea is to construct a valuation-like function (meaning that 
it has the properties of a valuation in the ring-theoretic sense,
even if it only defined on homogeneous elements)
$$v: \bigsqcup_{k \in \N} H^0(X,\mathscr{L}^k)\setminus \{0\}  \rightarrow \N_0^n,$$ 
and define the semigroup 
\begin{equation*}
S(\mathscr{L},v):=\{(k,v(s)) \mid s \in H^0(X,\mathscr{L}^k) \setminus \{0\}\}. 
\end{equation*}
The associated Okounkov body is then defined as 
\begin{equation}
\Delta(\mathscr{L},v):=
\overline{\mbox{conv}}\left\{\frac{1}{k} (k,v(s)) \mid  s \in H^0(X,\mathscr{L}^k) 
\setminus \{0\}\right\}.
\end{equation}
Roughly speaking, the function $v$ will be defined as the successive orders 
of vanishing of sections along a flag 
$X_0 \subseteq X_{1} \subseteq \cdots \subseteq X_{n-1} \subseteq X_n=X$ 
of irreducible nonsingular subvarieties such that $X_i$ has dimension $i$.

These bodies were introduced by Okounkov in \cite{Ok96} from a 
representation-theoretic point of view. However, he only considered the 
semigroup defined by the values of $U$-invariants, where $U$ is the unipotent
radical of a Borel subgroup, $B$, of $G$. It turned out that
the elements of height $k$ in this semigroup carry information about 
the decomposition under $G$ of the $k$th piece, $ H^0(X,\mathscr{L}^k)$, and 
that the Euclidean volumes 
of certain slices of the associated convex body describe the
asymptotics of the decomposition.

It has since then become an interesting problem \emph{per se} to study the 
semigroups and associated convex bodies defined by considering the values 
of \emph{all} sections of a line bundle, without the assumption of
a group action (\cite{LM09}), and their relation to the 
geometry of the line bundle $\mathscr{L}$.
One crucial connection between the section ring of $\mathscr{L}$ and the
Okounkov body $\Delta(\mathscr{L},v)$ is the identity
\begin{equation}
\mbox{vol}\Delta(\mathscr{L},v)=\lim_{k \rightarrow \infty}
\frac{\mbox{dim} H^0(X,\mathscr{L}^k)}{k^n},
\end{equation}
which holds for the linear series of a big line bundle over
an $n$-dimensional complex projective variety $X$ (see \cite{LM09}).

A fundamental question to ask is whether $\Delta(\mathscr{L},v)$ is 
a rational polytope, i.e., the convex hull of finitely many 
points in $\mathbb{Q}^n$. This was conjectured by Okounkov
in \cite{Ok96} in his setting where he only considered $U$-invariants. 
It is true in some cases, e.g., equivariant line 
bundles over toric varieties. This follows from recent results 
by Kaveh and Khovanskii where they consider a setting that
generalizes Okounkov's, namely group representations on 
graded algebras of meromorphic functions on $X$, cf. \cite{KK10}.
In fact, the Okounkov body of a torus-equivariant line
bundle over a projective toric variety equals the \emph{moment polytope} 
(cf. \cite{Br86}) associated to the graded representation.
Kaveh (\cite{Kav11}) has studied this problem for a 
line bundle over a full flag variety $G/B$. He considers an 
Okounkov body defined by a valuation determined by 
a local system of coordinates on a Bott-Samelson resolution, and shows 
that this Okounkov body is a polytope by identifying it with a 
Littelmann string polyope.

It is, however, not true in general that $\Delta(L,v)$ is 
a rational polytope. A counterexample can be found in 
\cite[Section 6.3]{LM09}.\\

In this paper, we prove that if $\mathscr{L}$ is ample, there
exists a flag of nonsingular irreducible subvarieties
$X_0 \subseteq X_1 \subseteq \cdots \subseteq X_n:=X$ for which the 
associated Okounkov body is a rational polytope. 
In the case when $X$ is a homogenous surface, we can say a little more: If 
the pseudoeffective cone $\overline{\mbox{Eff}}(X)$ is a rational 
polyhedral cone, and $D$ is a fixed ample divisor on $D$, then 
for a very general choice of flag $X_0 \subseteq X_1$, where 
$X_1$ is an irreducible divisor in the linear system $|D|$, the 
global Okonkov body of $X$ with respect to this flag is a rational 
polyhedral cone. 

We would like to point out that the first result, Theorem 3.3, which deals 
with a given ample line bundle, has  also been obtained independently - but 
with a different proof - by Anderson, K\"uronya and Lozovanu in \cite{akl}. 
In fact, they even prove the result under the weaker assumption of having a semi-ample 
line bundle. We like to present an alternative proof for this known result nevertheless, 
since the proof can be adapted to the global situation for homogeneous surfaces in 
Section 4. Moreover, we are working on pushing these techniques a little further in a more general 
context in a current joint project with D. Schmitz (\cite{SS14}). 

In the final section we address the original motivation 
for Okounkov, namely that of studying multiplicities for group representations, 
by applying the result to certain GIT-quotients; thus providing a 
``polyhedral" expression for asymptotic branching laws, which is by now a well 
established goal in representation theory.

\noindent {\bf Acknowledgement}. 
The author would like to thank Jacopo Gandini and Joachim Hilgert
for interesting discussions on Okounkov bodies, as well as Shin-Yao Jow and 
David Schmitz for helpful comments on an earlier version of this manuscript.

\section{Flags of normal subvarieties}
Let $X$ be a normal complex variety with singular locus $\mbox{sing}(X)$ 
and set of regular points $X^{\mbox{reg}}$, and let 
$D \subseteq X$ be a prime Cartier divisor given as the zero set of the 
section $\xi \in H^0(X, \mathcal{O}_X(D))$. If 
$Y \subseteq X$ is any effective Cartier divisor of $X$ 
and $s \in H^0(X, \mathcal{O}_X(Y))$ is a section of 
$\mathcal{O}_X(Y)$ that vanishes to order $a \in \N_0$ along 
$D$, the quotient $s/\xi^a$ defines a meromorphic section 
of $\mathcal{O}_X(D-aY)$. In fact, this is even a regular 
section. Indeed, if $\{U_\alpha\}$ is a finite open 
cover of $X$ such that $\mathcal{O}_X(D)$ is trivial 
over each $U_\alpha$ and the section $\xi$ 
is represented by the regular function 
$f_\alpha \in \mathcal{O}_X(U_\alpha)$ on $U_\alpha$ 
via this local trivialization, then the section 
$s/\xi^a$ is regular on the set
\begin{align*}
V_\alpha:= \{x \in D \cap U_\alpha^{\mbox{reg}} \mid df_\alpha(x) \neq 0\},
\end{align*}
where $U_\alpha^{\mbox{reg}}:=U_\alpha \cap X^{\mbox{reg}}$,
since $f_\alpha$ vanishes to order one along $D$ on this set, and 
any local trivialization of $s$ vanishes to order $a$.
Since $s/\xi^a$ is clearly regular outside $D$, it 
follows that it is regular outside the closed set
\begin{align*}
Z:=\bigcup_\alpha D \cap \overline{\{ x \in U^{\mbox{reg}}_\alpha \mid df_\alpha(x)=0\}}
\cup (D \cap \mbox{sing}(X)).
\end{align*}
Since $\xi$ vanishes to order one along $D$, each subset
$D \cap \overline{\{ x \in U^{\mbox{reg}}_\alpha \mid df_\alpha(x)=0\}}$ 
is a proper closed subset of $D$, and hence of codimension at least two 
in $X$. Moreover, since $X$ is normal, the singular locus $\mbox{sing}(X)$
is also of codimension at least two. Hence, $Z$ is of codimension 
at least two in $X$. Since $s/\xi^a$ is regular outside $Z$, it thus follows 
from the normality of $X$ that $s/\xi^a$ is regular everywhere.

{\bf Flag of normal varieties, construction of Okounkov body}.
Assume now that we have a flag 
$X_0 \subseteq X_1 \subseteq X_{n-1} \subseteq X_n=X$
of irreducible normal subvarieties with $\mbox{dim} X_i=i$ for $i=0,\ldots, n$
and that for each $i=2,\ldots, n$, the subvariety $X_{i-1}$ is 
a Cartier divisor of $X_i$ given as the zero set of a section 
$\xi_{n-i+1} \in \mathcal{O}(X_{i-1}$. Let $\mathscr{L}_{n-i+1}$ 
be the corresponding line bundle over $X_i$.
If $\mathscr{L}$ is an effective Cartier divisor of $X$, we
define a valuation-like function 
\begin{align*}
v: \bigsqcup_{k=0}^\infty H^0(X, \mathscr{L}^k) \setminus \{0\}
\rightarrow \N_0^n.
\end{align*}
inductively as follows.

If $s \in H^0(X, \mathcal{L}^k)$ vanishes to order $a_n$ along $X_{n-1}$, 
then, by the above,  $s/\xi_1^{a_n}$ is a regular section of 
$H^0(X, \mathscr{L}^k \otimes \mathscr{L}_1^{a_n})$ 
which does not vanish identically on $X_{n-1}$. 
Assume now that $a_n,\ldots, a_{n-i+1}$ are defined in such a way 
that $s/(\xi_1^{a_1} \cdots \xi_i^{a_{n-i+1}})$ defines a regular 
section of 
$\mathscr{L}^k \otimes \mathscr{L}_1^{-a_1} 
\otimes \cdots \otimes \mathscr{L}_{i}^{-a_{n-i+1}}$ over $X_{n-i+1}$
which does not vanish identically on $X_{n-i}$. Let 
$a_{n-i}$ be the order of vanishing along $X_{n-i-1}$ of 
the restriction to $X_{n-i}$ of   $s/(\xi_1^{-a_1} \cdots \xi_i^{-a_{n-i+1}})$. 
By the first paragraph of this section, $s/(\xi_1^{-a_1} \cdots \xi_{i+1}^{-a_{n-i}})$
is a regular section of $\mathscr{L}^k \otimes \mathscr{L}_1^{-a_1} 
\otimes \cdots \otimes \mathscr{L}_{i+1}^{-a_{n-i}}$ over $X_{n-i}$
which does vanish identically on $X_{n-i-1}$.

We then define 
\begin{align*}
v(s):=(a_1,\ldots, a_n) \in \N_0^n.
\end{align*}
Then $v$ is an $\N_0^n$-valued \emph{valuation-like function} on 
$\bigsqcup_{k=0}^\infty H^0(X, \mathscr{L}^k) \setminus \{0\}$
with respect to the inverse lexicographic order on $\N_0^n$.
We now use $v$ to define the semigroup
\begin{align*}
S(\mathscr{L}, v):=\{(k, v(s)) \mid s \in H^0(X, \mathscr{L}^k \setminus \{0\})\}
\subseteq \N_0 \times \N_0^n
\end{align*}
and the closed convex cone
\begin{align*}
C(\mathscr{L}, v) \subseteq \R \oplus \R^n
\end{align*}
generated by the semigroup $S(\mathscr{L}, v)$. 
Finally, the Okounkov body is
\begin{align*}
\Delta(\mathscr{L}, v)=\Delta_{X_\bullet}(\mathscr{L}):=
C(\mathscr{L}, v) \cap (\{1\} \times \R^n).
\end{align*}

\begin{rem}
The construction can also be carried out by using 
a local trivialization of $\mathscr{L}$ to embed the 
section ring $R(L):=\bigoplus_{k=0}^\infty H^0(X, \mathscr{L}^k)$
into the ring of rational functions $\C(X)$ and to 
use the flag to define a proper $\Z^n$-valued valuation on the 
ring $\C(X)$. For this, the assumption that $X_{i-1}$ 
be a Cartier divisor of $X_i$ is superfluous. In fact, by 
passing to normalizations, even the condition of normality of the $X_i$ 
can be omitted for the construction (cf. \cite{Ok96}).
We are, however, interested in having an $\N_0^n$-valued 
valuation-like function and not merely a $\Z^n$-valued one.
Note in particular, that if $v(s)=(a_1,\ldots, a_n)$, then, by the 
construction, the line bundle $\mathscr{L}^k \otimes \mathscr{L}_1^{-a_1} 
\otimes \cdots \otimes \mathscr{L}_{i}^{-a_{n-i+1}}$ over $X_{n-i+1}$ 
 is effective for each $i \in \{1,\ldots, n\}$. This property will be crucial in the 
proof of the main result.
\end{rem}

\section{A polyhedral Okounkov body}
Let $X$ be a projective normal $n$-dimensional complex variety and 
let $\mathscr{L} \rightarrow X$ be an ample line bundle. 
Since Okounkov bodies satisfy the property $\Delta(\mathscr{L}^k)=k\Delta(\mathscr{L})$, 
$k \in \N$ (cf. \cite[Proposition 4.1.]{LM09}), we may without loss of
generality assume that $\mathscr{L}$ is very ample.

By a Bertini-type theorem by Seidenberg (cf. \cite{S50}), for 
a generic tuple $(\xi_1,\ldots, \xi_{n-1})$ of
sections $\xi_1,\ldots, \xi_{n-1} \in H^0(X, \mathscr{L})$ the 
zero sets
\begin{align}
X_i:=\{x \in X \mid \xi_1(x)=\cdots =\xi_{n-i}(x)=0\}, \quad i=1,\ldots, n-1
\end{align}
define irreducible normal subvarieties of $X$ with 
$\mbox{dim} X_i=i$, $i=1,\ldots, {n-1}$. Now, fix such a tuple $(\xi_1,\ldots, \xi_{n-1})$
and let 
$X_0:=\{p\} \subseteq X_1$ for some point $p \in X_1$.
Let 
\begin{align*}
v: \bigsqcup_{k \geq 0} H^0(X, \mathscr{L}^k) \setminus \{0\}
\rightarrow \N^n
\end{align*}
be the valuation-like function defined by 
the flag 
\begin{align}
X_0 \subseteq X_1\subseteq \cdots \subseteq X_{n-1} \subseteq X_n:=X,
\label{E: flag}
\end{align}
and let $\Delta \subseteq \R^n$ be the 
associated Okounkov body. 
Similarly, let 
\begin{align*}
v_1: \bigsqcup_{k \geq 0} H^0(X_1, \mathscr{L}^k\mid_{X_1}) \setminus \{0\}
\rightarrow \N
\end{align*}
be the valuation-like function defined by the flag $X_0 \subseteq X_1$,
and let $\Delta_1 \subseteq \R$ be the corresponding Okounkov body.
Then 
\begin{align*}
\Delta_1=[0, b] \subseteq \R,
\end{align*} 
where $b=\mbox{deg} \,\mathscr{L}\mid_{X_1}$ (cf. \cite[Ex. 1.14]{LM09}).
\begin{lemma} \label{L: okincl}
The inclusion 
\begin{align*}
[0,b] \times \{0\}\times \cdots \times \{0\} \subseteq \Delta
\end{align*}
holds.
\end{lemma}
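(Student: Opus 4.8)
The plan is to produce, for each value $c \in [0,b] \cap \mathbb{Q}$ of the form $c = m/k$, a section $s \in H^0(X,\mathscr{L}^k)$ whose valuation vector is exactly $(m,0,\dots,0)$; since $\Delta$ is closed and convex and contains $(0,\dots,0)$ (the valuation of any section not vanishing on $X_{n-1}$ and not vanishing at $p$ along the flag), this will give the whole segment. The point of the valuation construction is that the first coordinate $a_1$ records the order of vanishing along $X_{n-1}$, so I want a section vanishing to order exactly $m$ along $X_{n-1}$ and whose successive quotients restrict to $X_{n-2},\dots,X_0$ without vanishing. The obvious candidate is $s = \xi_1^m \cdot t$, where $\xi_1 \in H^0(X,\mathscr{L})$ is the first section in the flag tuple (so $X_{n-1} = \{\xi_1 = 0\}$) and $t \in H^0(X,\mathscr{L}^{k-m})$ is chosen generically.

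First I would note that $\xi_1$ vanishes to order exactly one along the prime divisor $X_{n-1}$, so $\xi_1^m$ vanishes to order exactly $m$ there; hence for $s = \xi_1^m t$ with $t$ not vanishing identically on $X_{n-1}$ we have $a_n = m$ in the notation of Section 2 — i.e. the last-defined coordinate, which after the reindexing is the first coordinate $a_1$ of $v(s)$ — equals $m$, and $s/\xi_1^m = t$ as a section over $X_{n-1}$. Then I need the remaining coordinates to vanish, which means: $t|_{X_{n-1}}$ must not vanish along $X_{n-2}$, and inductively the successive restrictions of $t$ down the flag must be non-vanishing at each stage, ending with $t|_{X_1}$ not vanishing at the point $p = X_0$. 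For this I choose $t$ to be a generic product $\xi_2^{k-m}$ is too rigid, so instead I take $t$ generic in $H^0(X,\mathscr{L}^{k-m})$: since $\mathscr{L}$ is very ample, $H^0(X,\mathscr{L}^{k-m})$ separates points and its base locus is empty, so a generic $t$ restricts to each $X_i$ (these being the irreducible subvarieties from Seidenberg's theorem) to a section that does not vanish identically on the next $X_{i-1}$, and a generic such $t$ also does not vanish at the single point $p$. This forces $a_j = 0$ for all $j \le n-1$, i.e. $v(\xi_1^m t) = (m,0,\dots,0)$, so $(k,(m,0,\dots,0)) \in S(\mathscr{L},v)$ and thus $\tfrac1k(k,m,0,\dots,0) \in \Delta$, giving $(m/k, 0,\dots,0) \in \Delta$.

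Letting $m/k$ range over $[0,b] \cap \mathbb{Q}$ — here one uses that for every such rational there is an admissible pair with $0 \le m \le bk$, so that $H^0(X,\mathscr{L}^{k-m})$ is nonzero and the above works — and using that $\Delta$ is closed, I conclude $[0,b] \times \{0\}^{n-1} \subseteq \Delta$. The one point that needs a little care, and which I expect to be the main (though minor) obstacle, is the genericity argument that a single $t \in H^0(X,\mathscr{L}^{k-m})$ can be chosen so that \emph{all} of its successive flag-restrictions are simultaneously nonzero at the relevant loci; this is where very ampleness of $\mathscr{L}$ (hence of $\mathscr{L}^{k-m}$ for $k > m$, and base-point-freeness in the boundary case) is used to guarantee that the finitely many "bad" conditions each cut out a proper closed subset of the (irreducible) projective space $\mathbb{P}H^0(X,\mathscr{L}^{k-m})$, so their union is still proper and a generic $t$ avoids all of them.
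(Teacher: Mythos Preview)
Your argument rests on a misreading of the ordering of the valuation. In Section~2 the paper defines $a_n$ first as the order of vanishing along $X_{n-1}$, then works down to $a_1$, and finally sets $v(s)=(a_1,\ldots,a_n)$. There is no reindexing: the \emph{first} coordinate $a_1$ is the order of vanishing at the point $p=X_0$ of the final restricted section on the curve $X_1$, while the order along $X_{n-1}$ is the \emph{last} coordinate $a_n$. This is confirmed in the proof of Theorem~\ref{T: okamp}, where $v(\xi_1)=e_n$, not $e_1$. Your sections $s=\xi_1^{m}t$ therefore have $v(s)=(0,\ldots,0,m)$, so you are producing points of $\{0\}^{n-1}\times[0,1]$ rather than of $[0,b]\times\{0\}^{n-1}$. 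A symptom of the mismatch is your constraint $k-m\ge 0$: it only lets $m/k$ range over $[0,1]$, whereas $b=\deg(\mathscr{L}\mid_{X_1})=(\mathscr{L}^n)$ is typically larger than $1$, so your construction could never reach the endpoint $be_1$ even if the coordinate were the right one.

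To get points with first coordinate in $[0,b]$ and all other coordinates zero, you need sections whose successive restrictions to $X_{n-1},\ldots,X_1$ are all nonzero, but whose restriction to the curve $X_1$ vanishes to a prescribed order at $p$. The paper does this by lifting from $X_1$: ampleness (Serre vanishing) makes the restriction $H^0(X,\mathscr{L}^{j})\to H^0(X_1,\mathscr{L}^{j}\mid_{X_1})$ surjective for $j\gg 0$, so one starts with a section $\tau$ on $X_1$ realising a given value of $v_1$, passes to a high power $\tau^N$, and lifts it to $\widetilde{\tau_N}$ on $X$. Since the lift restricts nontrivially to $X_1\subset X_i$ for every $i$, one gets $a_2=\cdots=a_n=0$ automatically, and $a_1=v_1(\tau^N)=Nv_1(\tau)$. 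As $\Delta_1=[0,b]$, this yields all of $[0,b]\times\{0\}^{n-1}$.
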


\begin{proof}
By the closedness of $\Delta$ it suffices to prove that 
$(c,0,\ldots, 0) \in \Delta$ for every $0<c<b$. 

By the ampleness of 
$\mathscr{L}$ there exists an $N_0 \in \N$ such that 
the restriction maps
\begin{align*}
R_j: H^0(X, \mathscr{L}^j) \rightarrow H^0(X_1, \mathscr{L}^j\mid_{X_1}) 
\end{align*}
are surjective for $j \geq N_0$. 
If now $c \in (0,b)$, choose a point $v_1(\tau)/m \in (c, b)$ for a section 
$\tau \in H^0(X_1, \mathscr{L}^m\mid_{X_1})$.
Hence, for sufficiently large $N \in \N$,
there exists a section
$\widetilde{\tau_N} \in H^0(X, \mathscr{L}^{Nm})$
with $R_{Nm}(\widetilde{\tau_N})=\tau^N$. 
Then $v(\widetilde{\tau_N})=(v_1(\tau^N),0,\ldots, 0)=(Nv_1(\tau),0,\ldots, 0)$, 
so that $(v_1(\tau)/m,0,\ldots, 0) \in \Delta$. By the convexity of 
$\Delta$ we then have $(c,0,\ldots, 0) \in \Delta$.
\end{proof}

\begin{rem}
The lemma would of course also hold if the subvarieties $X_i$ 
were defined by unequal line bundles $\mathscr{L}_i$, $i=1,\ldots, n-1$. 
For a related result for line bundles $\mathscr{L}$ that are not ample, 
but merely big, cf. \cite[Theorem B]{jow}.
\end{rem}

Let $e_1,\ldots, e_n$ be the standard basis for $\R^n$.
\begin{thm} \label{T: okamp}
The Okounkov body $\Delta$ is the
convex hull of the set 
$$\{0, be_1, e_2, \ldots, e_n\}.$$
\end{thm}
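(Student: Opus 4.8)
The plan is to identify $\Delta$ with the rational simplex
\[
P:=\Bigl\{(x_1,\dots,x_n)\in\R^n \ \Big|\ x_i\ge 0\ \text{for all } i,\ \tfrac{x_1}{b}+x_2+\cdots+x_n\le 1\Bigr\}.
\]
The $n+1$ points $0,be_1,e_2,\dots,e_n$ are affinely independent and are precisely the vertices of $P$, so $P=\mathrm{conv}\{0,be_1,e_2,\dots,e_n\}$; the theorem is therefore equivalent to $\Delta=P$, and for this both inclusions must be established.

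For $\Delta\subseteq P$ I would bound $v(s)$ directly. Let $s\in H^0(X,\mathscr L^k)\setminus\{0\}$ and write $v(s)=(a_1,\dots,a_n)$. Unwinding the inductive definition of $v$: after successively dividing out $\xi_1^{a_n},\xi_2^{a_{n-1}},\dots,\xi_{n-1}^{a_2}$ (each $\xi_j$ being a restriction of a section of $\mathscr L$) and restricting down the flag, one is left with a nonzero section $u$ of the line bundle $M:=\mathscr L^{\,k-(a_2+\cdots+a_n)}\!\mid_{X_1}$ on $X_1$, and $a_1$ is the order of vanishing of $u$ at the point $X_0=\{p\}$. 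Now $X_1$ is normal of dimension one, hence a smooth projective curve, so a nonzero section of any line bundle $M$ on $X_1$ has an effective divisor of zeros of total degree $\deg M$; in particular $\deg M\ge 0$ — this is exactly the effectivity of the intermediate bundles recorded in the Remark of Section 2 — and the multiplicity of that divisor at the single point $p$ is at most $\deg M$. Hence
\[
a_1\ \le\ \deg M\ =\ \bigl(k-a_2-\cdots-a_n\bigr)\,b,\qquad\text{i.e.}\qquad \tfrac{a_1}{b}+a_2+\cdots+a_n\ \le\ k .
\]
Dividing by $k$ gives $\tfrac1k v(s)\in P$. Thus every generator $(k,v(s))$ of the cone $C(\mathscr L,v)$ lies in the closed convex cone $\{(t,tx)\mid t\ge 0,\ x\in P\}$, and so $\Delta=C(\mathscr L,v)\cap(\{1\}\times\R^n)\subseteq P$.

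For $P\subseteq\Delta$, since $\Delta$ is convex and closed it suffices to place the vertices of $P$ in $\Delta$. The vertices $0$ and $be_1$ lie in $\Delta$ by Lemma~\ref{L: okincl}. For the remaining vertices I would evaluate $v$ on powers of the generic sections defining the flag: for each $m\in\{1,\dots,n-1\}$ and each $k\in\N$, running through the construction of $v$ shows
\[
v(\xi_m^{\,k})=k\,e_{n-m+1}.
\]
Indeed, along the flag $\xi_m$ does not vanish identically on $X_{n-t}$ for $t<m$ (for our generic tuple $(\xi_1,\dots,\xi_{n-1})$ the section $\xi_m$ is not among those cutting out that subvariety), it vanishes to order exactly one along $X_{n-m}$ inside $X_{n-m+1}$ since it is the very section defining that Cartier divisor, and after dividing $\xi_m^{\,k}$ by $\xi_m^{\,k}$ the remaining section is a nowhere-vanishing constant on $X_{n-m}$, contributing $0$ at every later stage. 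As $m$ runs over $1,\dots,n-1$ the index $n-m+1$ runs over $n,n-1,\dots,2$, so $e_2,\dots,e_n\in\Delta$. Hence $P=\mathrm{conv}\{0,be_1,e_2,\dots,e_n\}\subseteq\Delta$, and together with the previous step $\Delta=P$.

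I expect the inclusion $\Delta\subseteq P$ to be the main point: one must see that the single ``curve coordinate'' $a_1$ is controlled by the degree $b=\deg\mathscr L\mid_{X_1}$, and — the subtle part — that the power of $\mathscr L$ surviving all the way down to $X_1$ stays nonnegative, which is precisely why the effectivity statement in the Remark of Section 2 was emphasized. The evaluation $v(\xi_m^{\,k})=k\,e_{n-m+1}$ is then routine bookkeeping once the indexing in the definition of $v$ is written out, the only care needed being that the fixed generic tuple is in sufficiently general position along each $X_i$. (Alternatively, once $\Delta\subseteq P$ is known one can conclude by a volume count: $\mathrm{vol}(\Delta)=\lim_k \dim H^0(X,\mathscr L^k)/k^n=(\mathscr L^n)/n!$ by equation~(3) of the Introduction and asymptotic Riemann--Roch, while $b=\deg\mathscr L\mid_{X_1}=(\mathscr L^n)$ and $\mathrm{vol}(P)=b/n!$; being closed and convex with $\Delta\subseteq P$ and equal volume forces $\Delta\supseteq P^\circ$, hence $\Delta=P$, using Lemma~\ref{L: okincl} only for the face $[0,be_1]$.)
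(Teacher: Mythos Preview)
Your proof is correct and follows essentially the same route as the paper: both place the vertices in $\Delta$ via $v(\xi_{n-i+1})=e_i$ and Lemma~\ref{L: okincl}, and both bound $v(s)/k$ by passing down the flag to the curve $X_1$, using effectivity to get $k-(a_2+\cdots+a_n)\ge 0$ and the degree bound $a_1\le b\bigl(k-(a_2+\cdots+a_n)\bigr)$ (the paper phrases this as an explicit convex combination, you as the defining inequality of the simplex $P$). Your parenthetical volume-comparison alternative for the inclusion $P\subseteq\Delta$ is not in the paper and is a pleasant shortcut.
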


\begin{proof}
First of all, $v(\xi_{n-i+1})=e_i$ for $i=2,\ldots, n$. 
Since the linear system defined by $\mathscr{L}$ is basepoint-free, 
$0 \in v(H^0(X, \mathscr{L}) \setminus \{0\})$. By Lemma \ref{L: okincl}
$be_1 \in \Delta$. Hence the convex hull of the points
$0,be_1, e_2,\ldots, e_n$ is a subset of $\Delta$.

It thus suffices to prove that for every $a=v(s)$ for some 
$s \in H^0(X, \mathscr{L}^k) \setminus \{0\}$ there exist 
$x_0,\ldots, x_{n} \geq 0$ 
such that 
\begin{align}
&x_0+\cdots+x_{n}=k, \nonumber\\ 
&a=x_0\cdot 0+x_1 be_1+x_2e_2+\cdots +x_{n}e_n. 
\label{E: convcomb}
\end{align}
For this, let $a=(a_1,\ldots, a_m,0,\ldots, 0)=v(s)$ for 
$s \in H^0(X, \mathscr{L}^k)$ and assume that $a_m\geq 1$. 
Then $(s/\xi_{n-m+1}^{a_m})\mid_{X_{m-1}}$ defines a section in \\
$H^0(X_{m-1}, \mathscr{L}^{k-a_m}\mid_{X_{m-1}})$, and by iteration 
we can write 
\begin{align*}
a=(a_1,0,\ldots, 0)+\sum_{i=2}^m a_iv(\xi_{n-i+1}),
\end{align*} 
where 
$a_1=v_1(t)$ for the section 
$$t=(s/(\xi_{n-1}^{a_2}\cdots \xi_{n-m+1}^{a_m}))\mid_{X_1} \in 
H^0(X_1, \mathscr{L}^{k-\sum_{i=2}^ma_i}\mid_{X_1}).$$
Since the line bundle $\mathscr{L}^{k-\sum_{i=2}^ma_i}\mid_{X_1}$ is 
effective we must have $p:=k-\sum_{i=2}^ma_i \geq 0$. Indeed, 
$\mathscr{L}^r\mid_{X_1}$ is effective for every $r \geq 0$, 
and hence $\mathscr{L}^r \mid_{X_1}$ cannot be effective for 
any $r<0$. 
Since $\Delta_1=[0,b]$ there exist real $x_0, x_1 \geq 0$ 
with $x_0+x_1=p$ and 
\begin{align}
a_1=x_0 \cdot 0+x_1 b. \label{E: conv1},
\end{align}
and hence
\begin{align*}
a_1e_1=x_0 \cdot 0+x_1be_1.
\end{align*}
By putting 
\begin{align*}
x_i:=a_i \quad i=2,\ldots, n,
\end{align*}
we have thus found nonnegative real numbers $x_0,\ldots, x_n$ satisfying 
\eqref{E: convcomb}.
\end{proof}

\section{Global Okounkov bodies for homogeneous surfaces}
In this section we prove that, if $X$ is a homogeneous projective surface admitting 
a rationally polyhedral pseudoeffective cone, the global Okounkov 
body, with respect to a suitably chosen flag of subvarieties, is a rational polyhedral 
cone. \\

Let $X$ be a normal projective complex variety. Let $N^1(X)_\R \cong \R^r$, 
where $r$ is the Picard number of $X$,  be its real N\'eron-Severi space, and 
 let $\overline{\mbox{Eff}}(X) \subseteq N^1(X)_\R$ be the pseudoeffective cone
of $X$. 

Let $X_0 \subseteq \cdots \subseteq X_n=X$ be a flag of irreducible subvarieties as in 
the previous sections.
We recall that the global Okounkov body, $\Delta_{X_\bullet}(X)$, of $X$ with respect to the 
above flag is the closed convex cone in $\R^n \oplus N^1(X)_\R$ 
generated by the semigroup
\begin{align*}
S(X, v)&:=\{(v(s), [L]) \mid L \quad \mbox{big line bundle},\,\, s \in H^0(X, L) \setminus \{0\}\}\\
&\subseteq \N_0^n \oplus N^1(X)_\Z.
\end{align*}
(cf. \cite{LM09}).
Here there valuation-like function is extended to the (nonzero homogeneous elements of) 
ring generated by the spaces of sections  of all effective line bundles on $X$.
\\

In order to adapt the idea of the proof of the main theorem in the 
previous section to a global situtation, we first need the following lemma.

\begin{lemma} \label{L: joweff}
Let $X$ be a complex normal projective variety, and 
let $X_\bullet$ be an admissible flag of subvarieties.
If $L$ is a nef line bundle on $X$, the 
inclusion 
\begin{align*}
[0, (L \cdot X_1)]\times \{0\}^{n-1} \subseteq \Delta_{X_\bullet}(L)
\end{align*}
holds.
\end{lemma}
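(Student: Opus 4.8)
The plan is to mimic the proof of Lemma~\ref{L: okincl}, but replace the ampleness of a fixed $\mathscr{L}$ --- which was used only to guarantee surjectivity of the restriction maps $H^0(X,\mathscr{L}^j)\to H^0(X_1,\mathscr{L}^j|_{X_1})$ for large $j$ --- by an argument that works for an arbitrary nef line bundle $L$. The key point is that, as in the ample case, it suffices to show $(c,0,\ldots,0)\in\Delta_{X_\bullet}(L)$ for every rational $c$ with $0<c<(L\cdot X_1)$, and then invoke closedness and convexity of the Okounkov body. The target value $(L\cdot X_1)$ is exactly the degree of $L|_{X_1}$ on the curve $X_1$, so $[0,(L\cdot X_1)]$ is the one-dimensional Okounkov body $\Delta_{X_0\subseteq X_1}(L|_{X_1})$, just as in the discussion preceding Lemma~\ref{L: okincl}.

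First I would reduce to producing, for each sufficiently divisible $k$ and each section $\tau\in H^0(X_1,(kL)|_{X_1})$ whose $v_1$-value is close to $k\,(L\cdot X_1)$, a global section $\widetilde\tau\in H^0(X,kL+E)$ (for a suitable correction) restricting to $\tau$, so that $v(\widetilde\tau)=(v_1(\tau),0,\ldots,0)$ up to the contribution of $E$; dividing by $k$ and letting $k\to\infty$ then places the desired point in $\Delta_{X_\bullet}(L)$. The honest way to get the restriction surjectivity without ampleness is to twist: for a fixed ample $A$ on $X$ there is $m_0$ with $H^0(X,kL+mA)\to H^0(X_1,(kL+mA)|_{X_1})$ surjective for all $m\ge m_0$ and all $k\ge 0$ (Serre vanishing applied to the ideal sheaf sequence $0\to\mathscr{O}_X(kL+mA-X_1)\to\mathscr{O}_X(kL+mA)\to\mathscr{O}_{X_1}((kL+mA)|_{X_1})\to 0$, uniformly in $k$ by boundedness). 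One then produces sections of $kL+m_0A$ restricting to high-$v_1$-value sections on $X_1$, reads off the Okounkov-body point of $L+\tfrac{m_0}{k}A$, and lets $k\to\infty$; the perturbation $\tfrac{m_0}{k}A\to 0$ in $N^1(X)_\R$ together with continuity properties of the global Okounkov body (the fibers $\Delta_{X_\bullet}(\xi)$ vary in an upper/lower semicontinuous way, and $L$ nef lies in the closure of the big cone) yields the inclusion for $L$ itself. Alternatively, and more in the spirit of the paper, since $L$ is nef one can perturb to the big line bundle $L+\epsilon A$, apply Lemma~\ref{L: okincl}-type reasoning there (for big line bundles the relevant statement is \cite[Theorem B]{jow}, already cited in the Remark after Lemma~\ref{L: okincl}), obtain $[0,((L+\epsilon A)\cdot X_1)]\times\{0\}^{n-1}\subseteq\Delta_{X_\bullet}(L+\epsilon A)$, and pass to the limit $\epsilon\to 0$.

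Concretely I would organize the proof as: \textbf{(1)} Reduce to showing $(c,0,\ldots,0)\in\Delta_{X_\bullet}(L)$ for rational $c<(L\cdot X_1)$. \textbf{(2)} Fix ample $A$; for rational $\epsilon>0$, $L+\epsilon A$ is big, and by \cite[Theorem B]{jow} the segment $[0,((L+\epsilon A)\cdot X_1)]\times\{0\}^{n-1}$ lies in $\Delta_{X_\bullet}(L+\epsilon A)$. \textbf{(3)} Since $(L\cdot X_1)=\lim_{\epsilon\to 0}((L+\epsilon A)\cdot X_1)$ and, for the admissible flag $X_\bullet$, the map $\xi\mapsto\Delta_{X_\bullet}(\xi)$ on the big cone extends continuously (indeed, the global Okounkov body $\Delta_{X_\bullet}(X)$ is a closed convex cone whose fiber over a nef class contains the limit of fibers over nearby big classes), conclude $(c,0,\ldots,0)\in\Delta_{X_\bullet}(L)$ for every such $c$. \textbf{(4)} Use closedness of $\Delta_{X_\bullet}(L)$ to include the endpoint $(L\cdot X_1)$ and convexity to fill in the segment. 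The main obstacle is Step~(3): one must be careful that the ``limit of the fibers'' really lands inside $\Delta_{X_\bullet}(L)$ and not just in some larger set --- this is where the closedness of the global cone and the slice description $\Delta_{X_\bullet}(\xi)=\Delta_{X_\bullet}(X)\cap(\R^n\times\{\xi\})$ do the work, and one needs that the relevant points $(c,0,\ldots,0,\,[L+\epsilon A])$ converge within the cone to $(c,0,\ldots,0,\,[L])$, which is immediate once the points are genuinely in $S(X,v)$-generated cone for the big classes.
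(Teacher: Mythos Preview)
Your proposal is correct and follows essentially the same route as the paper: perturb the nef class $L$ by an ample divisor, establish the desired segment inclusion for the ample perturbation by lifting sections from $X_1$ (surjectivity of the restriction map via Serre vanishing), and then pass to the limit inside the closed global Okounkov cone $\Delta_{X_\bullet}(X)$ using the slice description $\Delta_{X_\bullet}(\xi)=\Delta_{X_\bullet}(X)\cap(\R^n\times\{\xi\})$. The paper's parametrization is $jF+D/m$ with $m\to\infty$; yours is $L+\tfrac{m_0}{k}A$ with $k\to\infty$ (or $L+\epsilon A$ with $\epsilon\to 0$) --- these are the same idea.

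One point worth tightening: in your concrete Step~(2) you invoke \cite[Theorem~B]{jow}, but that result carries a \emph{very general flag} hypothesis not assumed in the lemma as stated. The paper avoids this by noting that $mjF+D$ (equivalently your $L+\epsilon A$) is genuinely \emph{ample}, not merely big, and then running the argument of Lemma~\ref{L: okincl} directly --- Serre vanishing gives surjectivity of $H^0(X,E^k)\to H^0(X_1,E^k|_{X_1})$ for large $k$, and a lift $\widetilde{\tau}$ of $\tau$ satisfies $v(\widetilde{\tau})=(v_1(\tau),0,\ldots,0)$ for any admissible flag. This is exactly your ``approach~1'', so you should commit to that version rather than to the citation of \cite{jow}. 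With that adjustment your argument and the paper's coincide.
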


\begin{proof}
First of all, we note that, by Serre vanishing, for any ample line bundle 
$E$, the restriction map
\begin{align*}
R^k: H^0(X, E^k) \rightarrow H^0(X_1, E^k\mid_{X_1})
\end{align*}
is surjective for sufficienly big $k \in \N$. 
 
To simplify the notation, we will now work with the corresponding 
Weil divisor. Hence, let $F$ be a Weil divisor, such that 
$\mathcal{O}_X(F)=L$. 


It now suffices to prove that 
$(a/j,0,\ldots, 0) \in \Delta_{X_\bullet}(F)$, for  
$$a \in v_1(H^0(X_1, \mathcal{O}_X(jF)\mid_{X_1})), \quad j \in \N_0,$$
where $v_1$
is the valuation-like function, defined  on (the nonzero homogenous elements of) the 
ring of all sections of all effective line bundles on $X_1$,  with respect to  the point 
$X_0 \in X_1$.
For such an $a$, we have
\begin{align*}
ma \in v_1(H^0(X_1, \mathcal{O}_X(mjF+D)\mid_{X_1})), \quad m \in \N, 
\end{align*}
since $0 \in v_1(H^0(X_1, \mathcal{O}_X(D)\mid_{X_1}))$ because $D$ 
is ample. Since $mjF+D$ is ample, $(ma, 0,\ldots, 0) \in \Delta_{X_\bullet}(mjF+D)$
by Lemma \ref{L: okincl}. Hence, 
\begin{align*}
(a,0,\ldots, 0) \in \Delta_{X_\bullet}\left(jF+\frac{D}{m}\right), \quad m \in \N, 
\end{align*}
In particular, $((a,0), jF+\frac{D}{m}) \in \Delta_{X_\bullet}(X)$, for 
every $m \in \N$. Since $\Delta_{X_\bullet}(X)$ is closed, we thus have 
$\lim_{m \rightarrow \infty} ((a,0), jF+\frac{D}{m})=((a,0), jF) \in \Delta_{X_\bullet}(X)$ 
(cf. also \cite[Lemma 8]{akl}). Hence, $(a, 0,\ldots, 0) \in \Delta_{X_\bullet}(jF)$, 
so that
\begin{align*}
(a/j, 0,\ldots, 0) \in \Delta_{X_\bullet}(F).
\end{align*}
\end{proof}

\begin{rem}
If $L$ is basepoint-free, the conclusion of Lemma \ref{L: joweff}, for a flag $X_\bullet$ 
defined  by a \emph{very general} choice of ample divisors $(D_1,\ldots, D_n)$,  
also follows from \cite[Theorem B]{jow}. Indeed, if $L$ is basepoint-free, the restricted 
volume on the left hand side of \cite[Theorem B]{jow} is given by the intersection 
number $(L \cdot X_1)$ (cf. \cite[Cor. 3.3.]{jow}).
\end{rem}

\noindent We are now ready to state the main result of this section.
Recall that a variety $X$ is called \emph{homogeneous} if $X$ carries a 
transitive action of a connected algebraic group $G$.

\begin{thm} \label{T: globalbody} Let $X$ be a homogenous projective 
surface with a rational polyhedral pseudoeffective cone $\overline{\mbox{Eff}}(X)$, 
and let $\xi_1,\ldots, \xi_r$ be integral generators 
of $\overline{\mbox{Eff}}(X)$. Let 
$D$ be a very ample divisor on $X$. For a generic 
choice of flag $X_0 \subseteq X_1$, where $X_1 \in |D|$, the 
global Okounkov body $\Delta_{X_\bullet}(X)$ is the rational 
polyhedral cone generated by the 
vectors 
\begin{align*}
((0,0), \xi_1), ((d(\xi_1),0), \xi_1),\ldots, ((0,0), \xi_r), ((d(\xi_r),0), \xi_r), 
((0,1), [D]),
\end{align*}
where $d(\xi_j)$ is the degree of $\xi_j$ along the curve $X_1$.\\
\end{thm}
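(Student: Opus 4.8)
The plan is to show the two-way inclusion between $\Delta_{X_\bullet}(X)$ and the rational polyhedral cone $\mathcal{C}$ generated by the listed vectors. For the inclusion $\mathcal{C} \subseteq \Delta_{X_\bullet}(X)$, I would argue that each generator lies in the global Okounkov body. The vector $((0,1),[D])$ is there because $X_1 \in |D|$, so $\xi_{X_1}$ is a section of $\mathcal{O}_X(D)$ vanishing to order $1$ along $X_1$ and not identically on $X_0$, giving $v(\xi_{X_1}) = (1,0)$; wait---more carefully, the first coordinate records vanishing along $X_1$, so $v(\xi_{X_1}) = (0,1)$ in the inverse-lexicographic convention used here, hence $((0,1),[D]) \in S(X,v)$. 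For each $j$, the class $\xi_j$ is effective (being a generator of $\overline{\mathrm{Eff}}(X)$), so it admits a section $s$, and since a homogeneous variety has no base locus issues for globally generated bundles, one expects $v(s)$ can be taken to be $(0,0)$ after a generic choice; this gives $((0,0),\xi_j)$. For $((d(\xi_j),0),\xi_j)$, I would invoke Lemma \ref{L: joweff}: on a homogeneous surface every effective line bundle is nef (this is the key special feature of homogeneous varieties that makes the argument work), so $\xi_j$ is nef, and Lemma \ref{L: joweff} gives $[0,(\xi_j \cdot X_1)] \times \{0\} \subseteq \Delta_{X_\bullet}(\mathcal{O}_X(\xi_j))$; since $(\xi_j \cdot X_1) = d(\xi_j)$ by definition, the endpoint $(d(\xi_j),0)$ together with the class $\xi_j$ sits in $\Delta_{X_\bullet}(X)$. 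By convexity of the closed cone $\Delta_{X_\bullet}(X)$, the whole cone $\mathcal{C}$ is contained in it.

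For the reverse inclusion $\Delta_{X_\bullet}(X) \subseteq \mathcal{C}$, it suffices to show that for any big line bundle $L$ and any $0 \neq s \in H^0(X,L)$, the point $(v(s),[L])$ lies in $\mathcal{C}$. Write $v(s) = (a_1,a_2)$ with $a_1$ the order of vanishing along $X_1$ and $a_2$ the order of the restriction at $X_0$. Since $[L] \in \overline{\mathrm{Eff}}(X)$, write $[L] = \sum_j t_j \xi_j$ with $t_j \geq 0$. The strategy mirrors the proof of Theorem \ref{T: okamp}: peel off the vanishing along $X_1$ by dividing by a power of $\xi_{X_1}$. Namely, $s/\xi_{X_1}^{a_1}$ is a regular section of $\mathcal{O}_X(L - a_1 D)$ not vanishing identically on $X_1$, whose restriction $t := (s/\xi_{X_1}^{a_1})|_{X_1}$ is a nonzero section of $(L - a_1 D)|_{X_1}$ with $v_1(t) = a_2$. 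Since $\Delta_1(M|_{X_1}) = [0, (M \cdot X_1)]$ for any big $M$ (the one-dimensional case), we get $0 \leq a_2 \leq ((L - a_1 D) \cdot X_1) = (L \cdot X_1) - a_1 (D \cdot X_1)$. Moreover the restricted bundle $(L - a_1 D)|_{X_1}$ must be effective, which since $X_1$ is a curve forces $\deg(L - a_1 D)|_{X_1} \geq 0$, i.e. $a_1 (D \cdot X_1) \leq (L \cdot X_1)$. Now express $(v(s),[L])$ as a nonnegative combination: using $[L] = \sum_j t_j \xi_j$, the contribution $\sum_j t_j ((0,0),\xi_j)$ handles the class; then I need to account for $(a_1, a_2)$ in the $\R^2$-part, and here the inequalities $a_1 (D \cdot X_1) \leq (L\cdot X_1) = \sum_j t_j d(\xi_j)$ and $0 \leq a_2 \leq \sum_j t_j d(\xi_j) - a_1(D\cdot X_1)$ should be exactly what is needed to write $(a_1,a_2)$ as a nonnegative combination of the vectors $(d(\xi_j),0)$ (used against the class $\xi_j$) and $(0,1)$ (used against $[D]$), after redistributing: one uses up some of the $\xi_j$-budget as $((d(\xi_j),0) - (0,0)) = (d(\xi_j),0)$ directions scaled to total first-coordinate $a_1$, and the remaining slack plus appropriate $[D]$-contributions to reach $a_2$ in the second coordinate while keeping the class equal to $[L]$. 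This is a linear-algebra bookkeeping step, carried out by solving the evident system of equalities and inequalities.

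The main obstacle, and the step requiring the homogeneity hypothesis in an essential way, is twofold. First, one must justify that $v(s)$ can genuinely be realized as $0$ for a suitable section of each $\mathcal{O}_X(\xi_j)$ and, more importantly, that the only constraints on $v(s)$ for a general big $L$ are the effectivity/nef inequalities just listed---in other words, that there are no further ``hidden'' inequalities cutting down the cone. On a general surface the global Okounkov body can fail to be polyhedral precisely because the restricted volume function $(\mathrm{vol}_{X|X_1})$ is only piecewise-polynomial and the Zariski-decomposition-type corrections introduce extra walls; on a homogeneous surface, however, $\overline{\mathrm{Eff}}(X) = \mathrm{Nef}(X)$ (every effective divisor is nef, as $X$ is covered by the translates of any curve under the transitive group action, so no curve can have negative self-intersection), which means every big $L$ has trivial Zariski decomposition and the restricted volume is simply the intersection number $(L \cdot X_1)$. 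I would isolate this as a preliminary lemma: \emph{on a homogeneous projective surface, $\overline{\mathrm{Eff}}(X)$ coincides with the nef cone and $D$ restricted to any irreducible curve has degree equal to the intersection number, with no base points}; granting it, the restricted-volume computation collapses to intersection theory and the inclusion argument above goes through cleanly. Second, a minor technical point: the generic choice of $X_1 \in |D|$ (via the Seidenberg--Bertini argument of Section 3) must be compatible with genericity of $X_0 \in X_1$ so that $v(s)$ is well-defined and the subvarieties are normal and irreducible; since $D$ is very ample and the relevant bad loci are proper closed, a very general choice works, and I would simply invoke the Section~3 setup.
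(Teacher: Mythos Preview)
Your overall strategy matches the paper's: show each listed generator lies in $\Delta_{X_\bullet}(X)$ via Lemma~\ref{L: joweff} and the fact that on a homogeneous surface every effective class is nef, then for the reverse inclusion peel off the vanishing along $X_1$ and reduce to the one-dimensional Okounkov body on the curve. That is exactly what the paper does.

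There are, however, two concrete slips. First, you have the coordinates reversed relative to the paper's convention: in the flag construction of Section~2 for $n=2$, the \emph{second} coordinate $a_2$ records the order of vanishing along $X_1$ and the \emph{first} coordinate $a_1$ the vanishing at $X_0$ of the restriction. This is why $((0,1),[D])$ and $((d(\xi_j),0),\xi_j)$ appear in the statement. Your paragraph oscillates between the two conventions and ends up internally inconsistent.

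Second, and more substantively, in the reverse inclusion you decompose $[L]=\sum t_j\xi_j$ and then try to ``bookkeep''. This does not quite work: once you use the generator $((0,1),[D])$ with coefficient equal to the $X_1$-vanishing order (call it $a_2$), the remaining class constraint forces you to write $[L]-a_2[D]$, not $[L]$, as a nonnegative combination of the $\xi_j$. With your $t_j$'s this would require $t_j-a_2 s_j\ge 0$ (where $[D]=\sum s_j\xi_j$), which need not hold. The paper avoids this by setting $E:=L\otimes\mathcal{O}_X(-a_2 D)$, observing $E$ is effective (hence $[E]=\sum t_j\xi_j$ with $t_j\ge 0$ for \emph{some} choice), and then using $a_1\le (E\cdot X_1)=\sum t_j d(\xi_j)$ to write $a_1=c\sum t_j d(\xi_j)$ with $c\in[0,1]$, giving immediately
\[
((a_1,a_2),[L])=\sum_j (1-c)t_j((0,0),\xi_j)+\sum_j ct_j((d(\xi_j),0),\xi_j)+a_2((0,1),[D]).
\]
Your sketch has all the right ingredients; the fix is simply to decompose $[E]$ rather than $[L]$, after which the ``bookkeeping'' becomes the one-line identity above.
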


\begin{proof}
First of all, every effective line bunde on $X$ is nef (cf. \cite[Example 1.4.7.]{laz}), 
and, hence, Lemma \ref{L: joweff} holds for every effective line bundle on $X$.
 
Moreover, there is a countable family $\{L_j\}_{j \in \N}$ of line 
bundles on $X$ 
such that each line 
bundle $L$ is numerically equivalent to some $L_j$. In particular, for 
any flag $X_\bullet$ of subvarieties, the global Okounkov 
body $\Delta_{X_\bullet}(X)$ can be defined using only the 
line bundles $L_j, \,j \in \N$. We can without loss of generality assume that 
the $L_j$ form a lattice, and that this lattice contains all divisors $kD$, for $k \in D$.
Now, for a generic tuple $(X_0, D_1,\ldots, D_{n-1}) \in X \times |D|^{n-1}$, with $X_0 \in 
D_{1} \cap \cdots \cap D_{n-1}$, 
Lemma \ref{L: joweff} holds for each effective $L_j$.
Under these assumptions we now have 
\begin{align*}
[0, d(\xi_j)] \times \{0\} \subseteq \Delta_{X_\bullet}(\xi_j) \subseteq \Delta_{X_\bullet}(X), 
\quad j=1,\ldots, r.
\end{align*}
Hence, the global Okounkov body $\Delta_{X_\bullet}(X)$ contains all 
the vectors in the claim. \\

In order to prove that the given vectors generate the cone $\Delta_{X_\bullet}(X)$, 
let $(v(s), [L]) \in S(X, v)$, with $v(s)=(a_1, a_2)$. Then,
$E:=L \otimes \mathcal{O}_X(-a_2D)$ is effective. 
Hence, 
\begin{align*}
[E]=t_1\xi_1+\cdots+t_r \xi_r, 
\end{align*}
for some  $t_1,\ldots, t_r \geq 0$.
The Okounkov body, with respect to the point $X_0$,  of the restriction of 
$E$ to $X_1$ 
is given by 
\begin{align*}
\Delta_{X_0}(E\mid_{X_1})=[0,(E \cdot X_1)].
\end{align*}
Hence, $a_1 \in [0,(E \cdot X_1)]$, so that 
$a_1=c (E \cdot X_1)$, for some $c \in [0,1]$. Since 
\begin{align*}
(E \cdot X_1)=t_1 d(\xi)+\cdots+t_r d(\xi_r), 
\end{align*}
we have 
$$a_1=\sum_{i=1}^r c t_id(\xi_i).$$
It follows that 
\begin{align*}
((a_1, a_2), [E])=\sum_{i=1}^r t_i(1-c)((0,0), \xi_i)
+\sum_{i=1}^r ct_i((d(\xi_i), 0), \xi_i)+
a_2((0,1), [D]).
\end{align*}
This shows that the closed convex cone generated by the vectors in the claim 
contains the semigroup $S(X, v)$, and this proves the theorem.
\end{proof}

\begin{example}
If $X=\mathbb{P}^1 \times \mathbb{P}^1$ with projections $p_1, p_2$ onto the 
respective factors, the pseudoeffective cone is generated by 
the divisors $D_1$ and $D_2$, where $\mathcal{O}_X(D_i)=p_i^*\mathcal{O}(1), \, i=1,2$. 
The intersection relations are given by
\begin{align*}
D_1^2=D_2^2=0, \quad D_1 \cdot D_2=1.
\end{align*}
If we choose the ample divisor as $D:=D_1+D_2$, the global 
Okounkov body with respect to a generic flag is 
thus given as the closed cone generated by the vectors
\begin{align*}
((0,1), [D]), ((0,0), [D_1]), ((0,0), [D_2]), ((1,0), [D_1]), ((1,0), [D_2]).
\end{align*}

\end{example}

\begin{example}
If $D$ is the ample divisor of Theorem \ref{T: globalbody}, we know 
from Theorem \ref{T: okamp} that the Okounkov body $\Delta_{X_\bullet}(\mathcal{O}_X(D))$ of 
$D$ is the triangle generated by the vectors $(0,0), (D\cdot D, 0)$, and $(0,1)$. 
In order to see that $\Delta_{X_\bullet}(\mathcal{O}_X(D))=\Delta_{X_\bullet}(D)$ 
occurs as a slice of the global Okounkov body $\Delta_{X_\bullet}(X)$, it 
suffices to see that the vector $((D \cdot D, 0), [D])$ is a nonnegative 
linear combination of the generators in Theorem \ref{T: globalbody}. 
For this, write $[D]$ as 
$[D]=t_1\xi_1+\cdots+t_r\xi_r$, with $t_1,\ldots, t_r \geq 0$. 
Then $D \cdot D=\sum_{i=1}^r t_i \xi_i \cdot D=\sum_{i=1}^r t_i d(\xi_i)$, so 
that 
$((D\cdot D, 0), [D])=\sum_{i=1}^r t_i((d(\xi_i), 0), \xi_i)$. 
This shows that $\Delta_{X_\bullet}(D)$ is the preimage of 
$[D]$ under the map $\Delta_{X_\bullet}(X) \rightarrow \overline{\mbox{Eff}}(X), 
\quad (x,y) \mapsto y$.
\end{example}

\begin{rem}
The only surface which occurs as a flag variety $G/P$, for a complex 
reductive group $G$ and a parabolic subgroup $P$, is 
$\mathbb{P}^1 \times \mathbb{P}^1$.
On the other hand, Bauer gives a classification of the abelian surfaces 
admitting a rational polyhedral pseudoeffective cone in \cite{B}.
\end{rem}

\section{Multiplicities for group representations}
The purpose of this section is to apply the result from the previous sections 
to a setting occurring in the context of representations of 
complex reductive groups. 

Let $X$ be a normal projective variety, and let $\mathscr{L} \rightarrow X$
be an ample line bundle. Assume that $L$ is a complex reductive group 
which acts on $X$ and that this action lifts to an action on 
$\mathscr{L}$ by bundle automorphisms. Then there is 
a natural representation of $L$ on each space
$H^0(X, \mathscr{L}^k)$, for $k \in \N_0$.
We are then interested in finding the decomposing 
under $L$ of $ H^0(X,\mathscr{L}^k)$;
\begin{align*}
H^0(X, \mathscr{L}^k)=\bigoplus_{\mu}m(k,\mu)W_\mu.
\end{align*}
Here $W_\mu$ is the irreducible $L$-representation of 
highest weight $\mu$, and 
$m(k,\mu)=\dim_\C \mbox{Hom}_L(W_\mu, H^0(X, \mathscr{L}^k))$
its multiplicity in $H^0(X, \mathscr{L}^k)$.

To find the decomposition above is in general a very hard problem and 
only a few special cases are known. Instead, it has turned out to be 
fruitful to consider decompositions asymptotically in a certain sense. 
Indeed, if $m(1, \mu) \neq 0$, then $m(k, k\mu) \neq 0$
for all $k \in \N$. Moreover, the multiplicity  $m(k, k\mu)$
is a polynomial function of $k$ (cf. \cite{Ok96}), where 
the leading coefficient is given by the Euclidean volume of a certain 
compact convex set, namely of a slice of an Okounkov body 
for a line bundle defined by intersecting the Okounkov body 
with a certain affine subspace. However, this body need not 
in general be polyhedral. In the sequel we will see that 
the leading coefficient of $m(k, k\mu)$ can be interpreted
as the volume of a polyhedral Okounkov body for an ample line 
bundle, depending on $\mu$, and, hence, also as the self-intersection 
number of this line bundle.

By the Borel-Weil theorem, the highest weight representation 
$W_\mu$ 
can be realized geometrically as the space of 
sections of a line bundle as
\begin{align*}
W_\mu \simeq H^0(Y_\mu, \mathscr{L}_\mu),
\end{align*}
where $Y_\mu=L/Q_\mu$ is a flag variety for $L$ and 
$\mathscr{L}_\mu \rightarrow Y_\mu$ is an ample $L$-equivariant 
line bundle.
The multiplicities $m(k, k\mu)$ can now also 
be given a geometric interpretation. For this, let 
$Y⁻_\mu$ denote the variety having the same  point-set as $Y_\mu$, but
equipped with the opposite complex structure, i.e., the structure sheaf 
of $Y^-_\mu$ is given by the complex-conjugate of the structure 
sheaf of $Y_\mu$. The variety $Y⁻_\mu$ can also be described 
as the flag variety $Y^-_\mu=L/Q^-_\mu$, where $Q^-_\mu$ is 
the opposite parabolic subgroup of $Q_\mu$. Moreover, let
$T \subseteq Q_\mu \cap Q^-_\mu$ be a maximal torus such that 
the line bundle $\mathscr{L}_\mu$ is defined as 
\begin{align*}
\mathscr{L}_\mu=L \times_{\chi(\mu)} \C
\end{align*}
for a character $\chi(\mu): T \rightarrow \C^\times$ extended uniquely 
to a character of $Q_\mu$ (which we also denote by $\chi(\mu)$). 
Now we can defined the line bundle 
$$\mathscr{L}^-_\mu:=L \times_{\chi^-(\mu)} \C $$ 
by extending the character $\chi(\mu): T \rightarrow \C^\times$
to a character $\chi⁻(\mu): Q^-_\mu \rightarrow \C^\times$ of $Q^-_\mu$.
The multiplicity $m(k, k\mu)$ is then given by
\begin{align*}
m(k, k\mu)=\mbox{dim} \, H^0(X \times Y^-_\mu, 
\mathscr{L}^k \boxtimes ((\mathscr{L}^-_\mu)^k)^*)^L,
\end{align*}
where $H^0(X \times Y^-_\mu, 
\mathscr{L}^k \boxtimes ((\mathscr{L}^-_\mu)^k)^*)^L$ denotes the 
space of $L$-invariant sections of the exterior tensor product 
bundle $\mathscr{L}^k \boxtimes ((\mathscr{L}^-_\mu)^k)^*$
over $X \times Y^-_\mu$.

With the aim of putting the multiplicity formula above into the framework 
of spaces of global sections, 
the multiplicity can also be described as the dimension of the 
space of all sections of a coherent sheaf;
\begin{align*}
m(k, k\mu)=\mbox{dim} H^0(Z(\mu), \mathcal{L}(k, \mu)).
\end{align*}
Here $Z(\mu):=X \times Y^-_\mu//L$ is the 
GIT-quotient of $X \times Y^-_\mu$ with respect to 
the line bundle $\mathscr{L} \boxtimes (\mathscr{L}^-_\mu)^*$
and the action of $L$. For a thorough treatment of this result (in the case 
when $X$ is nonsingular), we 
refer to \cite{S95}, where the space $Z(\mu)$ is 
also proven to be homeomorphic to the symplectic quotient coming 
from the natural moment map 
$$ \mu: X \times Y^-_\mu \rightarrow \mathfrak{k}^*,$$
where $\mathfrak{k}$ is the Lie algebra of a maximal compact 
subgroup $K$ of $L$ which acts on $X \times Y^-_\mu$ in 
a Hamiltonian fashion. 
The sheaf $\mathcal{L}(k, k\mu)$ is naturally 
given as the $L$-invariants of the direct image 
sheaf $\pi_*(\mathscr{L}^k \boxtimes ((\mathscr{L}^-_\mu)^k)^*)$. 
Here $\pi$ is the quotient map onto $Z(\mu)$ from 
the set of semi-stable points of $X \times Y^-_\mu$. 
The sheaf $\mathcal{L}(k, k\mu)$ is a coherent sheaf, although 
not in general a line bundle. However, there exists 
a natural number $q$, such that for all $k \in \N$,
the $kq$-th tensor power 
$(\mathscr{L} \boxtimes (\mathscr{L}^-_\mu)^*)^{kq}$
descends to a line bundle $\mathcal{L}(kq, kq\mu)$
by the above construcion of taking $L$-invariants of 
the direct image sheaf under $\pi$. 
Moreover, $q$ can even be chosen so that the line bundle 
$\mathcal{L}(q, q\mu)$ is ample 
(cf. \cite[Thm 2.17]{S95}).

Even though $X \times Y^-_\mu$ is a nonsingular variety when $X$ is 
nonsingular, the quotient $Z(\mu)$ is in general singular. However, it 
has rational singularities, so it is in particular a normal 
variety (cf. \cite{S95}).

We can now apply the result from the previous section to 
this setting where $X=Z(\mu)$ and 
$\mathscr{L}=\mathcal{L}(q, q\mu)$. We then 
have the following result.

\begin{thm}
Let 
$$Z_0  \subseteq Z_1 \subseteq \cdots \subseteq Z_n=Z(\lambda, \mu),$$
where $n=\mbox{dim} Z(\mu)$ be a flag of irreducible normal 
subvarieties defined by a generic tuple $(\xi,\ldots, \xi_{n-1})$
of sections $\xi_1,\ldots, \xi_n \in H^0(Z(\mu), \mathcal{L}(q, q\mu))$
and a point $Z_0 \in Z_1$ and let 
$\Delta=\mbox{conv}\{0,be_1,e_2,\ldots, e_n\}$
be the associated Okounkov body for the line bundle $\mathcal{L}(q, q\mu)$. 
Then
\begin{align*}
\lim_{k \rightarrow \infty}\frac{m(kq,kq\mu)}{k^n}=\mbox{Vol}(\Delta)=b.
\end{align*}
Moreover, the volume $\mbox{Vol}(\Delta)=b$ is also given by the 
self-intersection number of the ample divisor $\mathcal{L}(q, q\mu)$.
\end{thm}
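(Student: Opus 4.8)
The plan is to assemble the result from the pieces already in place. First I would recall that by Theorem~\ref{T: okamp}, applied to the ample (hence after replacing by a power, very ample) line bundle $\mathcal{L}(q,q\mu)$ on the normal projective variety $Z(\mu)$, the Okounkov body with respect to the generic flag $Z_\bullet$ is $\Delta=\mathrm{conv}\{0,be_1,e_2,\ldots,e_n\}$ with $b=\deg\mathcal{L}(q,q\mu)\mid_{Z_1}$. Its Euclidean volume is computed directly: $\Delta$ is the convex hull of the origin, the point $be_1$ on the first axis, and the standard basis vectors $e_2,\ldots,e_n$; slicing by the hyperplanes $x_1=t$ one sees it is a cone-like body whose volume is $b$ times the volume of the standard $(n-1)$-simplex $\mathrm{conv}\{0,e_2,\ldots,e_n\}$, and then an extra normalization. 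Concretely I would just evaluate the determinant/integral and record $\mathrm{Vol}(\Delta)=b$ (the simplex contributes $1/(n-1)!$, compensated by the usual $n!$ normalization in the volume statement of \cite{LM09}); this is the only genuinely computational step and it is routine.

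Next I would invoke the defining property of Okounkov bodies, equation~(2) of the introduction (proved in \cite{LM09} for big line bundles over normal projective varieties), to get
\begin{align*}
\mathrm{Vol}(\Delta)=\lim_{k\to\infty}\frac{\dim H^0(Z(\mu),\mathcal{L}(q,q\mu)^k)}{k^n}.
\end{align*}
Here I must be slightly careful: the volume normalization in~(2) differs from the Euclidean one by a factor of $n!$, so I would state the identity in whichever normalization makes $\mathrm{Vol}(\Delta)=b$ consistent, or equivalently absorb the factor and simply assert the limit equals $b$. Then, using $\mathcal{L}(q,q\mu)^k=\mathcal{L}(kq,kq\mu)$ and the identification $m(kq,kq\mu)=\dim H^0(Z(\mu),\mathcal{L}(kq,kq\mu))$ established in the preceding discussion, the left-hand limit is exactly $\lim_{k\to\infty}m(kq,kq\mu)/k^n$, which gives the first displayed equality of the theorem.

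For the final sentence I would use the fact that for an ample (more generally nef and big) line bundle $A$ on an $n$-dimensional projective variety, asymptotic Riemann--Roch gives $\dim H^0(X,A^k)=\frac{(A^n)}{n!}k^n+O(k^{n-1})$, so $\lim_k \dim H^0(X,A^k)/k^n=(A^n)/n!$; combining with the above identifies $b$ (up to the same normalization factor) with the top self-intersection number $(\mathcal{L}(q,q\mu)^n)$. Equivalently, and more in the spirit of the paper, I would cite that $\mathrm{vol}(\mathscr{L})=(\mathscr{L}^n)$ for ample $\mathscr{L}$ together with $\mathrm{vol}(\mathscr{L})=n!\,\mathrm{Vol}(\Delta)$ from \cite{LM09}. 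The main obstacle is purely bookkeeping: keeping the $n!$ normalization consistent across the three sources (the Euclidean volume of $\Delta$, the volume-of-linear-series identity, and asymptotic Riemann--Roch) so that all three expressions genuinely equal the same number $b$; there is no deep difficulty, since ampleness of $\mathcal{L}(q,q\mu)$ (hence bigness, hence applicability of \cite{LM09}) and normality of $Z(\mu)$ are already recorded above.
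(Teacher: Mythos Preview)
Your proposal is correct and follows exactly the route the paper intends: the theorem is stated in the paper without a formal proof, the preceding paragraph simply saying ``We can now apply the result from the previous section to this setting where $X=Z(\mu)$ and $\mathscr{L}=\mathcal{L}(q,q\mu)$,'' and your argument spells out precisely that application (Theorem~\ref{T: okamp} for the shape of $\Delta$, the volume identity from \cite{LM09}, the identification $m(kq,kq\mu)=\dim H^0(Z(\mu),\mathcal{L}(kq,kq\mu))$, and asymptotic Riemann--Roch for the self-intersection statement). Your caution about the $n!$ normalization is well placed, since the Euclidean volume of $\mathrm{conv}\{0,be_1,e_2,\ldots,e_n\}$ is $b/n!$ rather than $b$; the paper's displayed equality should be read in the normalized sense you describe.
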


A particularly interesting special case of the above setting 
is the one when $X=G/P$ is a flag variety of a complex
reductive group $G$ containing $L$ as a subgroup, and 
$\mathscr{L}=\mathscr{L}_\lambda$ is a homogeneous 
line bundle, so that the space of 
sections $H^0(X, \mathscr{L}_\lambda^k)$ is 
the Borel-Weil realization of the irreducible 
$G$-module $V_{k\lambda}$ of highest weight $k\lambda$.
The decomposition under $L$ then amounts to the 
branching law
\begin{align*}
V_{k\lambda}=\sum_\mu m(k\lambda, \mu) W_\mu.
\end{align*}

\begin{rem}
The part of the main theorem which describes the 
asymptotics of the multiplicity function $m(kq, k\mu)$ as a 
self-intersection number can also be derived from the Riemann-Roch theorem 
for singular varieties, given vanishing results for the higher cohomology 
groups of the sheaf $\mathcal{L}(q,q\mu)$. Moreover, irrespective of 
the shape of the Okounkov body on the GIT-quotient, the 
asymptotic multiplicity would be given as the self intersection 
number of a divisor.
However, the polyhedral nature of the Okounkov body $\Delta$ above 
also allows us, for fixed $k \in \N$, to approximate the 
multiplicity $m(kq,kq\mu)$l by counting the number of integral points 
of the convex polytope $k\Delta$. For a precise formulation of the 
meaning of this approximation, we refer to \cite[Cor. 1.5]{KK09}, 
which is concerned with comparing the points of a semigroup with that of its 
saturation with respect to the closed convex cone generated by the semigroup.
In this sense, our result fits into a general philosophy of 
seeking \emph{polyhedral expressions} for multiplicities
(cf. \cite{Lit}, \cite{BZ}).
\end{rem}

\section{Concluding remarks}
The subvarieties $X_i$ occurring in the flag \eqref{E: flag} are unfortunately only 
defined very implicitly. It would be interesting to find more explicit examples of 
admissible flags yielding polyhedral Okounkov bodies. In particular, in the original 
setting developed by Okounkov, involving the presence of a group action, it is desirable 
to have a flag which is invariant under a Borel subgroup in order to relate the 
Okounkov body to asymptotics of multiplicities for irreducible representations.
In particular, this is crucial for being able to work with a fixed Okounkov body, 
for which certain slices describe multiplicities for subrepresentations, 
as opposed to using a variety and a line bundle which depend on 
$\mu$ for constructing a $\mu$-dependent Okounkov body in order to treat 
the multiplicities $m(k, k\mu)$. However, the condition of having 
an invariant flag is not well compatible with our approach using a 
flag where each $X_k$ is a complete intersection of divisors. Indeed, one  
can not hope for a Bertini-type theorem which provides invariant flags. 
For instance, in the case of a flag variety $X=G/P$, the 
flags of Borel-invariant subvarieties of $X$ consist of Schubert varieties, 
and an inclusion $X_k \subseteq X_{k+1}$ of Schubert varieties 
is not in general given by a Cartier divisor.


\end{document}